\theoremstyle{plain}
\newtheorem{theorem}{Theorem}[section]
\newtheorem{lemma}[theorem]{Lemma}
\theoremstyle{definition}
\newtheorem{definition}[theorem]{Definition}
\theoremstyle{remark}
\newtheorem{claim}{Claim}
\newcommand{\Z}{\mathbb{Z}}
\newcommand{\del}{\partial}
\begin{document}

\title{Incompressibility and normal minimal surfaces}

\author{Tejas Kalelkar}

\address{   Stat-Math Unit\\
        Indian Statistical Institute\\
        Bangalore, India}

\email{tejas@isibang.ac.in}

\date{\today}


\maketitle
\begin{abstract}
In this paper we describe a procedure for refining the given triangulation of a 3-manifold that scales the PL-metric according to a given weight function while creating no new normal surfaces.

It is known that an incompressible surface $F$ in a triangulated 3-manifold $M$ is isotopic to a normal surface that is of minimal PL-area in the isotopy class of $F$. Using the above scaling refinement we prove the converse. If $F$ is a surface in a closed 3-manifold $M$ such that for any triangulation $\tau$ of $M$, $F$ is isotopic to a $\tau$-normal surface $F(\tau)$ that is of minimal PL-area in its isotopy class, then we show that $F$ is incompressible.
\end{abstract}

\section{Introduction}
Given a Riemannian manifold $(M, g)$, we can scale the metric by multiplying $g$ with a smooth positive real-valued function. Such a rescaling may, however, introduce new minimal surfaces. Given a triangulated 3-manifold $(M, \tau)$, we can scale the PL-metric by taking a refinement of $\tau$, by repeatedly subdividing the tetrahedra in $\tau$ according to a positive integer-valued scaling function. In general, such a scaling may introduce new minimal normal surfaces. We describe here a procedure for scaling the PL-metric that introduces no new normal surfaces.

\begin{definition}\label{refinement}
Let $\Delta$ be a tetrahedron with vertices labeled $\{ a, b, c, d \}$. Let $e$ be a point in the
interior of $\Delta$. Take a simplicial triangulation of $\Delta$ using the
tetrahedra $\Delta_A = [b, c, d, e], \Delta_B = [a, c, d, e], \Delta_C = [a, b, d, e]$ and $\Delta_D = [a, b, c, e]$. Define $\phi$ on a triangulation $\tau$ to be the function that gives a refinement of $\tau$ by dividing each tetrahedron $\Delta$ of $\tau$ into 4 tetrahedra, as described above. We call this the refinement function. This is shown in Figure \ref{Fig1} where the additional edges in the refinement of $\Delta$ are shown as dotted-lines.

Let $f: \{\Delta : \Delta \in \tau \} \rightarrow \Z$ be a function that associates a non-negative integer to each tetrahedron $\Delta$ of $\tau$. We call such a function a scaling function. Define $\phi_f$ on $\tau$ to be the function that gives to each $\Delta \in \tau$ the triangulation $\phi^{f(\Delta)}(\Delta)$, obtained by taking $f(\Delta)$ iterates of $\phi$ on $\Delta$. As the faces of $\Delta$ are also faces of $\phi(\Delta)$, $\phi_f(\tau) = \tau'$ is a refined triangulation of $\tau$.
\end{definition}

\begin{figure}
\centering
\includegraphics[width=0.3\textwidth]{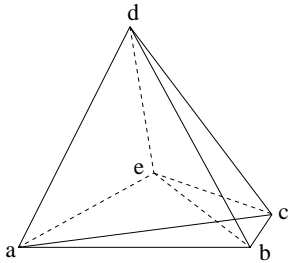}
\caption{A tetrahedron in $\tau$, partitioned by $\phi$.}\label{Fig1}
\end{figure}
The main theorem in this paper is Theorem \ref{mainthm}.

\begin{theorem}\label{mainthm}
Let $F$ be a closed surface embedded in a 3-manifold $M$ no component of which
is a 2-sphere. Let $\tau$ be a triangulation of $M$. Let $f: \{\Delta : \Delta \in \tau \} \rightarrow \Z$ be a scaling function and let $\tau' = \phi_f(\tau)$ be the corresponding refinement of $\tau$. Then, $F$ is $\tau$-normal $\Leftrightarrow$ $F$ is
$\tau'$-normal.
\end{theorem}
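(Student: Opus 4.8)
The plan is to reduce the statement to a purely local one and then carry out a finite case analysis. Observe first that $\phi$ leaves the four faces of each tetrahedron untouched and modifies only its interior, and that $\phi_f$ is obtained by applying $\phi$ successively to tetrahedra of the intermediate triangulations. Hence I would induct on the total number of elementary refinements $\sum_{\Delta} f(\Delta)$: it suffices to prove that a single application of $\phi$ to a single tetrahedron $\Delta$ of a triangulation preserves normality, the inductive step being an application of this fact to whichever sub-tetrahedron is being subdivided. Throughout I take the interior point $e$ to be disjoint from $F$ (a generic choice within the normal isotopy class), so that $F$ avoids the one new vertex of $\tau'$.

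For the forward implication I would argue as follows. If $F$ is $\tau$-normal then $F\cap\Delta$ is a disjoint union of normal triangles and quadrilaterals, which I place in standard (flat) position. Each such disk $D$ separates $\Delta$ into two components, and the fixed vertex $e$ lies in exactly one of them; the key point is that $D$ meets each sub-tetrahedron $\Delta_A,\dots,\Delta_D$ in a normal disk in either case. This is a finite verification over the seven disk types (four triangles and three quadrilaterals) and the two possible sides for $e$. Tracking the partition of the vertices $\{a,b,c,d,e\}$ induced by $D$ shows, for instance, that a triangle cutting off $a$ meets the three sub-tetrahedra at $a$ in triangles when $e$ lies on the $\{b,c,d\}$-side, whereas when $e$ lies on the $a$-side it meets $\Delta_A$ in a triangle (cutting off $e$) and the other three in quadrilaterals; in every case the boundary arcs on the preserved faces match those of $D$, and the arcs on the new faces $[exy]$ are normal. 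Since the disks of $F\cap\Delta$ are disjoint, assembling these local pictures shows $F$ is $\tau'$-normal.

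For the converse, suppose $F$ is $\tau'$-normal. Because the faces of $\tau$ are faces of $\tau'$, $F$ already meets every face of $\tau$ in normal arcs, so it remains to show that each component $C$ of $F\cap\Delta$ is a normal triangle or quadrilateral. Here $C$ is assembled from normal disks of the $\Delta_X$ glued along the normal arcs in which $F$ meets the interior faces $[exy]$. I would first rule out closed components: a closed component lies in the interior of the ball $\Delta$ and is a closed normal surface of the refined triangulation, which I claim must be the vertex-linking $2$-sphere of $e$; such a component would be a $2$-sphere component of $F$, contradicting the hypothesis. For a component $C$ with boundary I would analyse the normal arcs of $F$ on the six interior faces by an innermost-arc argument, pushing $C$ off the interior faces to recover a single normal disk of $\Delta$; the partition of $\{a,b,c,d\}$ recorded by the boundary arcs of $C$ on the preserved faces then identifies $C$ as a normal triangle or quadrilateral, reversing the correspondence of the forward direction.

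The main obstacle is this converse classification of the pieces of $F\cap\Delta$. In the forward direction the position of $e$ is harmless, but in the converse I must exclude exotic reassemblies of sub-tetrahedron normal disks, both closed components other than linking spheres and boundary components that are not normal disks of $\Delta$. Controlling these rests on the combinatorics of normal arcs on the new faces together with the no-$2$-sphere hypothesis, which is exactly what removes the one genuinely new normal surface created by the refinement, namely the link of the new vertex $e$.
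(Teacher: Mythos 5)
Your architecture coincides with the paper's: reduce to a single application of $\phi$ to a single tetrahedron, prove the forward direction by a case analysis of how each $\tau$-normal disk meets the four sub-tetrahedra according to which side of the disk the new vertex $e$ lies on (this is exactly the content of Lemma \ref{disks}, and your case table is correct), and prove the converse by classifying the components of $F \cap \Delta$ assembled from $\tau'$-normal pieces. The forward half and the induction on $\sum_\Delta f(\Delta)$ are sound, with one small correction: $e$ is part of the data of the refinement, not a point you may choose generically; the right fix is a $\tau$-normal isotopy pushing $F$ off $e$, which is how Lemma \ref{disks} is phrased (``$\tau$-isotopic to a $\tau$-normal triangle'').

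The converse, however, is where the theorem lives, and there your proposal has a genuine gap: the two classification claims it rests on are asserted rather than proved, and the one mechanism you name --- an innermost-arc argument pushing $C$ off the interior faces --- is the wrong instrument for them. To show that a component $C$ is a $\tau$-normal disk you must show that $C$ is a disk and that $\del C$ is a normal curve of length $3$ or $4$ in $\del \Delta$; both are properties of $F$ itself, and isotoping $C$ rel boundary inside the ball $\Delta$ changes neither $\del C$ (already fixed in $\del \Delta$, and a priori possibly a normal curve of length $8$ or more) nor excludes non-disk components, such as an annulus assembled from quadrilateral pieces. Your ``partition of $\{a,b,c,d\}$ recorded by the boundary arcs'' criterion is meaningless until exactly these possibilities are ruled out, and the claim that a closed normal surface in the refined ball must be the vertex-linking sphere of $e$ is of the same difficulty as the converse itself. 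The paper closes this gap purely combinatorially: Lemma \ref{sets} shows that contiguous normal arcs determine the coordinates of the adjacent piece ($\lambda \in \Lambda_x * \Lambda_x$ forces a triangle linking $x$; $\lambda \in \Lambda_x * \Lambda_{xy}$ forces a quadrilateral linking $xy$), and three short claims then chase coordinates around $C$: a component containing a triangle $(X, T_e)$ is the vertex-linking sphere of $e$ or a $\tau$-normal triangle; a component containing a triangle $(X, T_w)$ with $w \neq e$ is a $\tau$-normal triangle or quadrilateral; and --- the step for which your sketch has no counterpart --- no component consists of quadrilaterals alone (the paper's Claim 3), which is what eliminates all the exotic assemblies at once. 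Your outline correctly names the dangers but supplies no argument against them; to complete the proof you need something equivalent to Lemma \ref{sets} together with this three-claim coordinate chase.
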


Every $\tau$-normal surface is $\tau'$-normal by observing that each $\tau$-normal disk is a union of $\tau'$-normal disks, as shown in lemma \ref{disks}. For the converse, the proof depends on a simple examination of the possible $\tau'$-normal proper embeddings of a surface in a tetrahedron $\Delta$ of the triangulation $\tau$. We show that every $\tau'$-normal surface within $\Delta$ is in fact a $\tau$-normal disk, hence every $\tau'$-normal surface is also $\tau$-normal. \\

In the second part of this paper we use such a refinement of the triangulation to obtain a PL-analogue of the theorem proved in \cite{Ga}. It is known that if $F$ is a smooth incompressible surface in an irreducible Riemannian 3-manifold $M$, then the isotopy class of $F$ has a least area surface. The theorem proved by Gadgil in \cite{Ga} proves the converse, that is, if $F$ is a smooth surface in a closed, irreducible 3-manifold $M$ such that for each Riemannian metric $g$ of $M$, $F$ is isotopic to a least-area surface $F(g)$, then $F$ is incompressible.

Similarly, in the PL case, it is known that an incompressible surface $F$ in a triangulated 3-manifold $M$ is isotopic to a normal surface that is of minimal PL-area in the isotopy class of $F$. We prove here the converse.
\begin{theorem}\label{incomprthm}
Let $F$ be a closed orientable surface in an irreducible orientable closed 3-manifold $M$. Then, $F$ is incompressible
if and only if for any triangulation $\tau$ of $M$, there exists
a $\tau$-normal surface $F(\tau)$ isotopic to $F$ that is of minimal PL-area in the
isotopy class of $F$.
\end{theorem}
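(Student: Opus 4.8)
The plan is to prove the two directions of Theorem \ref{incomprthm} separately, with the forward direction being essentially classical and the converse being the substantive use of the scaling refinement.

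For the forward direction, suppose $F$ is incompressible in the irreducible closed $M$. This is the known PL-analogue of the smooth existence theorem: I would invoke normal surface theory (in the style of Jaco--Oertel or Jaco--Rubinstein) to show that the minimal PL-area representative in the isotopy class of $F$ exists and is normal with respect to any given triangulation $\tau$. The key facts are that, since $F$ is incompressible, it carries essential topology that cannot be eliminated, so a PL-area minimizing sequence in the isotopy class converges (after normalization moves that do not increase area) to a $\tau$-normal surface $F(\tau)$, and no compressing or isotopy move reduces its area below the minimum. I do not expect this direction to present difficulty; it is the standard direction and I would cite the appropriate normal surface existence results.

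\begin{proof}[Proof strategy for the converse]
For the converse, suppose $F$ is \emph{not} incompressible; I must produce a triangulation $\tau$ for which no $\tau$-normal surface isotopic to $F$ is PL-area minimizing in its isotopy class. Since $M$ is irreducible and $F$ is compressible, there is a compressing disk $D$, and compressing $F$ along $D$ yields a surface $F'$ of strictly smaller genus (and strictly smaller area, after smoothing) that is still isotopic to $F$ up to the surgery, and by irreducibility the relevant sphere bounds a ball so no component is lost. The idea is that the compressed surface, suitably isotoped, has genuinely smaller PL-area, so the ``minimal PL-area'' representative in the isotopy class of $F$ is not normal---but I must make this robust against all triangulations. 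The mechanism is to start from a fixed triangulation $\tau_0$ in which the compressing disk $D$ sits inside a small subcomplex, and then apply the scaling refinement $\phi_f$ of Definition \ref{refinement} with a weight function $f$ concentrated away from $D$, so that by Theorem \ref{mainthm} the set of normal surfaces is unchanged while the PL-metric is scaled to make the normal representative of $F$ have area bounded below, whereas an isotopic (non-normal) surface obtained by pushing across the compression has smaller area. Thus no $\tau$-normal surface in the class of $F$ can achieve the infimum of PL-area, contradicting the hypothesis.
\end{proof}

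The main obstacle, and the place where Theorem \ref{mainthm} is essential, is controlling PL-area under refinement: an arbitrary refinement of $\tau$ generally creates new normal surfaces and can lower the area of the minimizer, which would destroy the argument. The content of Theorem \ref{mainthm} is exactly that the scaling refinement $\phi_f$ introduces \emph{no} new normal surfaces, so the combinatorial class of normal surfaces representing $F$ is preserved while the metric is rescaled by the chosen weights $f$. I would therefore spend the bulk of the proof constructing $f$ so that the PL-area of the normal representative is forced up (by weighting the tetrahedra that any normal surface in the class of $F$ must cross) while exhibiting an explicit isotopy of $F$ across the compressing disk into a region of small total weight, giving a surface of strictly smaller PL-area in the same isotopy class. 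Balancing these two area estimates quantitatively---choosing $f$ large enough on the ``unavoidable'' tetrahedra and verifying the compressed surface truly has smaller area in the scaled metric---is the crux, and is precisely where the compressibility of $F$ is converted into a failure of minimality.
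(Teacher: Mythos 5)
Your forward direction matches the paper's (cite the known normalization/weight-reducing isotopy result), and your converse correctly identifies the role of Theorem \ref{mainthm} --- scale the PL-metric by $\phi_f$ without creating new normal surfaces, so that normal representatives of $F$ are forced to be expensive while a non-normal representative stays cheap. But there is a genuine gap at the heart of your plan: you propose to weight ``the tetrahedra that any normal surface in the class of $F$ must cross,'' yet you give no mechanism for identifying such tetrahedra, and a priori none exists --- a normal surface isotopic to $F$ could lie entirely inside whatever low-weight region you leave. The paper closes exactly this hole with a structural result your sketch lacks: it triangulates a regular neighbourhood $N(\hat F)$ of the \emph{compressed} surface $\hat F$ by the prism triangulation (Section 3) and proves (Lemma \ref{normalinprism}, Theorem \ref{insidenbhd}) that every closed connected normal surface inside $N(\hat F)$ is normally isotopic to $\hat F \times \{0\}$. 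Since each component of $\hat F$ has genus strictly less than that of $F$, no normal surface isotopic to $F$ can live in the cheap region at all; any normal surface isotopic to $F$ must leave $N(\hat F)$ and hence, by the scaling (Lemmas \ref{weights} and \ref{outsidenbhd}), has $1$-weight exceeding $W = w^{(1)}(\hat F)$. Your construction, centred on the compressing disk $D$ rather than on $N(\hat F)$, has no analogue of this classification and therefore cannot rule out a cheap normal representative.

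Two further points would fail as written. First, your claim that compressing $F$ along $D$ yields a surface ``still isotopic to $F$ up to the surgery'' is false --- compression strictly drops genus, and the paper \emph{uses} this non-isotopy to get the final contradiction via Theorem \ref{insidenbhd}. Second, your cheap representative of the class of $F$ is described as ``pushing across the compression,'' which does not preserve the isotopy class; the paper's trick is instead to write $F = \hat F \cup (\text{1-handle } \gamma)$ and isotope $\gamma$ off the $1$-skeleton of $\tau$, so the isotopy class of $F$ always contains a (necessarily non-normal) surface of $1$-weight exactly $W$. With weight $W$ achievable but every normal isotopic surface of weight $> W$ (outside $N(\hat F)$) or non-isotopic (inside), no normal minimal surface exists. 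Without the prism-triangulation classification and the thin-handle representative, the quantitative ``balancing'' you defer to the end of your proposal cannot be carried out.
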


If $F$ is an incompressible surface that is not normal in a triangulation $\tau$ of $M$, then it is known that a PL-area decreasing isotopy exists. To prove the converse, we show that given a compressible surface $F$, there exists a triangulation $\tau'$ for which the isotopy class of $F$ has no normal minimal surface. An outline of the proof is as follows. Let $\hat{F}$ be the surface obtained by compressing $F$ along a compressing disc. Therefore, $F$ is obtained from $\hat{F}$ by attaching a 1-handle to $\hat{F}$. We start with a certain `prism' triangulation of a regular neighbourhood $N(\hat{F})$ of $\hat{F}$ which is such that any connected normal surface lying in $N(\hat{F})$ is isotopic to a component of $\hat{F}$. We extend this triangulation to a triangulation $\tau$ of $M$. Let $Ar(F)$ be the PL-Area of $F$ in $\tau$, then as the 1-handle can be chosen to avoid all edges of $\tau$, we always get a representative of $F$ (in its isotopy class) such that $Ar(F) = Ar(\hat{F})$.

We define a scaling function $f: \{ \Delta : \Delta \in \tau \} \rightarrow \Z$ that takes the value 0 on $\Delta \subset N(\hat{F})$ and a value greater than $Ar(\hat{F})$ for $\Delta$ not in $N(\hat{F})$. We now take the refinement $\tau'$ of $\tau$ given by $\phi_f$. As every $\tau'$-normal surface is also $\tau$-normal by Theorem \ref{mainthm}, a $\tau'$-normal surface that does not lie in $N(\hat{F})$ has a $\tau$-normal disk outside $N(\hat{F})$. This disk has $\tau'$ PL-area more than the $\tau'$ PL-area of $\hat{F}$. As $\hat{F}$ is not homeomorphic to $F$, a normal surface that lies entirely in $N(\hat{F})$ is not isotopic to $F$, while a normal surface that does not lie in $N(\hat{F})$ has $\tau'$ PL-area more than that of $\hat{F}$. As there is always a surface isotopic to $F$ that has $\tau'$ PL-area equal to that of $\hat{F}$ (and which is not normal) so, the isotopy class of $F$ has no normal minimal-area surfaces in $\tau'$.

\section{Proof of Theorem \ref{mainthm}}
In this section, we first show lemma \ref{sets} which determines normal disks using normal arcs in the boundary of the disk. Then, using Fig \ref{Fig2} we prove lemma \ref{disks} which says that a $\tau$-normal disk is a $\tau'$-normal surface. We then prove Theorem \ref{mainthm}. We introduce the following notation:
\begin{definition}
A $\tau'$-normal triangle $T$ in $\Delta_X \subset \Delta, X \in \{ A, B, C, D \}$, is said to link a vertex
$w$ in $\Delta_X$ if $\del \Delta_X - \del T$ has a component that contains the vertex $w$
and no other vertices of $\Delta_X$. We say the coordinates of $T$ are $[T] = (X, T_w)$. When the context is clear we shall denote the triangle $T$ itself by its coordinates $(X, T_w)$.

Similarly, a $\tau'$-normal quadrilateral $Q$ in $\Delta_X \subset \Delta, X \in \{ A, B, C, D \}$, is said to link an
edge $yz$ in $\Delta_X$ if $\del \Delta_X - \del Q$ has a component that contains the
vertices $y$ and $z$, and no other vertices of $\Delta_X$. We say the coordinates of $Q$ are $[Q] = (X, Q_{yz})$. When the context is clear we shall denote the quadrilateral $Q$ itself by its coordinates $(X, Q_{yz})$.
\end{definition}

\begin{definition}
A $\tau'$-normal arc $\lambda$ is said to link a vertex $x$
(respectively an edge $yz$) in a face $F$ of $\tau'$ if $F - \lambda$ has a
component that contains the vertex $x$ and no other vertices of
$F$ (respectively contains the vertices $y$ and $z$ and no other
vertices of $F$). Denote the set of $\tau'$-normal arcs in faces of tetrahedra of $\tau'$,
linking vertex $x$ (respectively edge $yz$) by $\Lambda_x$ (respectively $\Lambda_{yz}$).
\end{definition}

\begin{definition}
We define $\Lambda_x * \Lambda_y$ (respectively $\Lambda_x *
\Lambda_{xy})$ to be the set of $\tau'$-normal paths that are not
contained in a single face, and are given by the concatenation of
an arc in $\Lambda_x$ with an arc in $\Lambda_y$ (respectively
$\Lambda_{xy}$).
\end{definition}

We now state the following lemma which says that given a pair of contiguous normal arcs in the boundary of a normal disc, we can determine whether the disc is a triangle or a quadrilateral and we can determine which vertex (respectively which edge) it links. Also, if we are given that the normal disk is a triangle (respectively a quadrilateral) and we are given one normal arc in its boundary, then the vertex linked by the normal triangle (respectively the edge linked by the normal quadrilateral) can be determined.

\begin{lemma} \label{sets}
For a $\tau'$-normal disk $D$ with a normal path $\lambda
\subset \del D$,\\
(i) If $D$ is a triangle with $\lambda \in \Lambda_x$ then $D$ is
a triangle linking the vertex $x$.\\
(ii) If $D$ is a quadrilateral with $\lambda \in \Lambda_{xy}$
then $D$ is a quadrilateral linking the edge $xy$.\\
(iii) If $D$ is a quadrilateral in the tetrahedron $[w, x, y,
z]$ with $\lambda$ in the face $[x, y, z]$ and $\lambda \in
\Lambda_z$, then $D$ is a quadrilateral linking the edge $wz$.\\
(iv) If $\lambda \in \Lambda_x * \Lambda_x$ then $D$ is a
triangle linking the vertex $x$. \\
(v) If $\lambda \in \Lambda_x * \Lambda_{xy}$ then $D$ is a
quadrilateral linking the edge $xy$.
\end{lemma}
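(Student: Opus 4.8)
The plan is to prove all five parts by the same basic strategy: use the classification of normal disks in a tetrahedron (a normal disk is either a triangle, cutting off one vertex, or a quadrilateral, separating the four vertices into two pairs), and track how the boundary arcs determine the combinatorial type. The key combinatorial fact I'd lean on is that in a single tetrahedron $\Delta_X = [p,q,r,s]$, a normal triangle has three boundary arcs, each linking one of the three vertices on the triangle's ``small'' side (i.e., the single vertex it cuts off), while a normal quadrilateral has four boundary arcs, and each arc links an edge of the separating pair on the side of the face containing it.

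**First I would** set up notation carefully. For a normal triangle $T$ linking vertex $w$, observe that each of its three boundary arcs lies in one of the three faces containing $w$, and in each such face the arc separates $w$ from the other two vertices of the face — so each boundary arc of $T$ lies in $\Lambda_w$. This immediately gives (i): if $\lambda \in \Lambda_x$ lies in $\partial D$ and $D$ is known to be a triangle, then $\lambda$ must be one of these three arcs, forcing the linked vertex to be $x$ (a triangle linking any other vertex $w \neq x$ would have all boundary arcs in $\Lambda_w \neq \Lambda_x$, a contradiction since the arc types $\Lambda_x$ and $\Lambda_w$ are disjoint within a face). Part (iv) then follows because $\Lambda_x * \Lambda_x$ is a concatenation of two arcs each linking $x$ in adjacent faces; a quadrilateral cannot produce such a path (its arcs link edges, not a single vertex consistently across two faces), so $D$ must be the triangle linking $x$.

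**Next I would** handle the quadrilateral cases (ii), (iii), (v) by the analogous edge-linking analysis. A normal quadrilateral $Q$ linking edge $yz$ separates $\{y,z\}$ from the remaining pair; in each face it meets, its boundary arc separates two vertices from the others, and the arc-type recorded is determined by which pair of vertices lies on the boundary-arc's component. For (ii), if $\lambda \in \Lambda_{xy}$ and $D$ is a quadrilateral, then $\lambda$ separates $\{x,y\}$ from the third face-vertex; a quadrilateral linking edge $xy$ does exactly this, and no quadrilateral linking a different edge can, giving the result. For (iii) the subtlety is that $\lambda$ lies in face $[x,y,z]$ and is in $\Lambda_z$ (linking the single vertex $z$ within that face), so within the face $\lambda$ separates $z$ from $\{x,y\}$; since $D$ is a quadrilateral in $[w,x,y,z]$, its separation of the four vertices into two pairs must put $z$ with $w$ (the vertex not on this face) — hence $D$ links edge $wz$. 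Part (v) combines (i)-style and (ii)-style reasoning: $\Lambda_x * \Lambda_{xy}$ forces a disk whose boundary has an arc linking $x$ alone and an adjacent arc linking edge $xy$, which is realized only by the quadrilateral linking $xy$.

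**The main obstacle** I anticipate is (iii), because there the recorded arc-type $\Lambda_z$ (a \emph{vertex}-linking arc, not an edge-linking arc) seems at odds with $D$ being a quadrilateral; the resolution requires carefully using the ambient tetrahedron $[w,x,y,z]$ and the fact that a quadrilateral's boundary arc in face $[x,y,z]$ necessarily separates a single vertex from the other two in that face (because the fourth vertex $w$ is off the face), so the vertex it isolates pins down which pair the quadrilateral uses. I would make this rigorous by enumerating the three quadrilateral types in $[w,x,y,z]$ (linking $wx$, $wy$, or $wz$) and checking their boundary-arc in face $[x,y,z]$: the quadrilateral linking $wz$ is the unique one whose arc in that face lies in $\Lambda_z$. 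The rest is routine once the correspondence between linked vertices/edges and boundary-arc types is tabulated.
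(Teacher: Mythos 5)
Your proposal is correct and follows essentially the same route as the paper: both arguments rest on the classification of normal disks in a tetrahedron and a tabulation of how each boundary arc of a triangle (all arcs link the cut-off vertex) or quadrilateral (the four arcs link four distinct vertices, determined by the quad's separation of the vertex pairs) sits inside its face, from which (i)--(v) follow by matching arc types. The only cosmetic difference is in (ii), where the paper shows $\del D \cap xy = \emptyset$ via a transversality/parity argument on $\del D$ while you enumerate the three quadrilateral types directly; your handling of the subtlety in (iii) --- that the vertex isolated in face $[x,y,z]$ must be paired with the off-face vertex $w$ --- matches the paper's observation that quadrilaterals linking $xy$ are precisely those linking $wz$.
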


\begin{proof}
We make the following simple observations:\\
a. A normal disk $D$ is a quadrilateral if and only if each normal
arc in $\del D$ links a distinct vertex, which is the same as saying normal
arcs in $\del D$ link more than one vertex.\\
b. A normal triangle $T$ in a tetrahedron $[ w, x, y, z ]$ links
the vertex $x$ if and only if any normal arc in $\del T$ links $x$.\\
c. A normal quadrilateral $Q$ in a tetrahedron $[w, x, y, z ]$
links edge $xy$ if and only if $\del Q \cap xy = \phi$.\\

The statement (i) follows from observation b.

Let $D$ be a quadrilateral in a tetrahedron $[ w, x, y, z ]$,
with $\lambda \in \Lambda_{xy}$ and $\lambda$ contained in the
face $F = [ x, y, z ]$. Then $\del
\lambda$ is contained in the arcs $xz$ and $yz$. Therefore, if
$\del D \cap xy \neq \phi$ then as $\del D$ is transverse to
edges, $\del D$ is a circle in $\del \Delta$ transversely intersecting each edge of the triangle $[x, y, z]$. Therefore, $\del D$ must intersect some edge of this triangle more than once. This is a contradiction as $D$ is a normal disc, so that $\del D$ intersects each edge at most once.
Therefore, $\del D \cap xy = \phi$ and statement (ii) follows from observation c.

Statement (iii) follows from a similar argument replacing
$\Lambda_{xy}$ with $\Lambda_z$ and observing that quadrilaterals
that link $xy$ are precisely the quadrilaterals that link $wz$.

Statement (iv) follows from observations a and b.

The disk $D$ in statement (v) is a quadrilateral from observation a.
As there exists an arc $\lambda \subset \del D$ with $\lambda \in
\Lambda_{xy}$, from statement (ii) we can see that $D$ links the
edge $xy$.
\end{proof}

\begin{figure}
\centering
\includegraphics[width=0.7\textwidth]{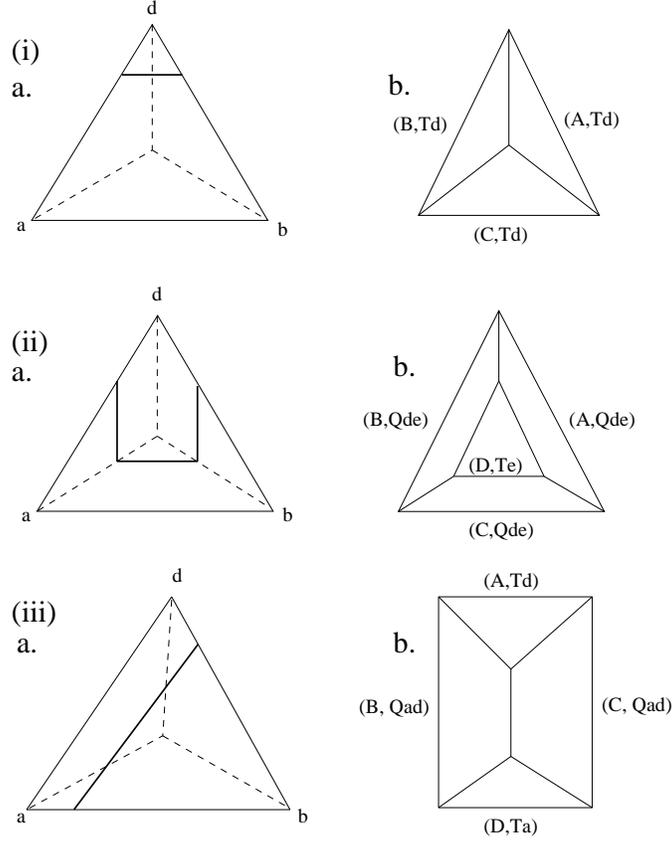}
\caption{Diagrams (i) and (ii) represent a normal triangle linking vertex d. Diagram (iii) represents a normal quadrilateral linking edge ad.}\label{Fig2}
\end{figure}

We now state the lemma which shows that every $\tau$-normal disk is a union of $\tau'$-normal disks.
\begin{lemma}\label{disks}
Let $S$ be a properly embedded surface in $\Delta$. Then,\\
(i) $S$ is a $\tau'$-normal surface with $S = (A, T_d) \cup (B,
T_d) \cup (C, T_d)$ or $S = (A, Q_{de}) \cup (B, Q_{de}) \cup (C,
Q_{de}) \cup (D, T_e)$ $\Leftrightarrow$ $S$ is $\tau$-isotopic to a $\tau$-normal
triangle linking vertex $d$. \\
(ii) $S$ is a $\tau'$-normal surface with $S = (D, T_a) \cup (B,
Q_{ad}) \cup (C, Q_{ad}) \cup (A, T_d)$ or $S = (B, T_c) \cup (D, Q_{bc}) \cup (A, Q_{bc}) \cup (C, T_b)$ $\Leftrightarrow$ $S$ is $\tau$-isotopic to a $\tau$-normal quadrilateral linking edge $ad$.\\
(iii) $S$ is a $\tau'$-normal surface with $S = (A, T_e) \cup (B,
T_e) \cup(C, T_e) \cup(D, T_e)$ $\Leftrightarrow$ $S$ is a $\tau'$ vertex-linking sphere linking the vertex $e$.
\end{lemma}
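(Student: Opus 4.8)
The plan is to prove all three biconditionals through a single mechanism: a $\tau'$-normal disk $S$ in $\Delta$ separates the ball $\Delta$, and the combinatorial type of each intersection $S\cap\Delta_X$ is dictated by how the separating disk partitions the five vertices $\{a,b,c,d,e\}$. The two decompositions listed in (i) and (ii), and the closed surface in (iii), will arise precisely according to which side of $S$ the interior vertex $e$ lies on.

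For the ($\Leftarrow$) direction of (i) I would take a $\tau'$-normal surface $S$ that is $\tau$-isotopic to a normal triangle linking $d$. Then $S$ is a disk, and since $\partial S$ is a normal curve linking $d$ it separates the boundary sphere $\partial\Delta$; as $\Delta$ is a ball, $S$ separates $\Delta$ into two balls, one containing $d$ and the other containing $\{a,b,c\}$, and $e$ lies in exactly one of them. If $e$ is on the $\{a,b,c\}$-side then $\Delta_D=[a,b,c,e]$ misses $S$, while in each of $\Delta_A,\Delta_B,\Delta_C$ the vertex $d$ is separated from the other three, so normality forces $S\cap\Delta_X$ to be the triangle linking $d$, giving $(A,T_d)\cup(B,T_d)\cup(C,T_d)$. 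If instead $e$ is on the $d$-side, then in each of $\Delta_A,\Delta_B,\Delta_C$ the pair $\{d,e\}$ is separated from the remaining pair, giving quadrilaterals $(X,Q_{de})$, while in $\Delta_D$ the vertex $e$ is separated from $\{a,b,c\}$, giving $(D,T_e)$; this is the second decomposition. The two positions are interchanged by an isotopy pushing $S$ across $e$ inside the interior of $\Delta$.

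For the ($\Rightarrow$) direction of (i) I would start from a $\tau'$-normal $S$ realizing one of the two decompositions and first confirm that the listed pieces assemble into an embedded surface: on each shared internal face $[x,d,e]$ the two adjacent pieces each contribute a $\tau'$-normal arc, and Lemma \ref{sets} shows these arcs link the same vertex (resp.\ edge) of that face, so they coincide. I would then identify the topology by an Euler-characteristic count, reading the intersections of $S$ with the $1$-skeleton of $\tau'$ as vertices, the boundary and shared internal arcs as edges, and the pieces as faces; this yields $\chi(S)=1$, so $S$ is a disk whose boundary crosses $da,db,dc$ exactly once, i.e.\ the boundary of a normal triangle linking $d$. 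Since two properly embedded disks in the ball $\Delta$ with such boundaries are isotopic, $S$ is $\tau$-isotopic to a normal triangle linking $d$. Part (ii) follows the same template, now starting from a normal quadrilateral linking $ad$, which separates $\{a,d\}$ from $\{b,c\}$, and splitting according to whether $e$ lies on the $\{a,d\}$-side or the $\{b,c\}$-side; reading off the induced vertex partition in each $\Delta_X$ produces the two displayed decompositions, each with two corner triangles and two quadrilaterals, and the Euler count again gives a disk whose boundary crosses $ab,ac,bd,cd$ once, a quadrilateral linking $ad$. Part (iii) is the degenerate case where $a,b,c,d$ all lie on one side and $e$ on the other: each $\Delta_X$ meets $S$ in a triangle linking $e$, none of whose arcs reaches a face of $\Delta$, so $S$ is closed and the same count gives $\chi(S)=2$, the vertex-linking sphere of $e$; conversely the link of $e$ meets each $\Delta_X$ in $(X,T_e)$.

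I expect the main obstacles to be twofold. First is exhaustiveness of the case split: I must verify that a $\tau'$-normal $S$ isotopic to the given disk genuinely separates $\Delta$, so that $e$ has a well-defined side, and that this side pins down $S\cap\Delta_X$ as a single normal disk in every sub-tetrahedron, ruling out any decomposition other than the two listed. Second is the gluing step in the ($\Rightarrow$) direction, where Lemma \ref{sets} is exactly what guarantees that the arcs contributed by adjacent pieces agree on all six internal faces, so that the union is an embedded normal surface rather than an abstract union of disks. The remaining work---deciding, from each side-assignment of the five vertices, which normal disk type sits in each $\Delta_X$---is routine but error-prone bookkeeping, best tracked against Figure \ref{Fig2}, and must be checked across all four sub-tetrahedra and all six internal faces.
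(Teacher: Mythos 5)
Your proposal is correct in outline but takes a genuinely different route from the paper's. The paper proves the direction from $\tau$-normal disk to decomposition by a $\tau$-normal isotopy to standard position --- taking $S$ to be $\partial B(d) \cap \Delta$, $\partial B(ad) \cap \Delta$, or $\partial B(e)$ for a small ball neighbourhood of the linked vertex or edge --- and reading the intersection pattern with $\Delta_A, \dots, \Delta_D$ directly from Figure \ref{Fig2}; the converse is dispatched in one line by observing that each listed union is a properly embedded disk (or sphere) whose boundary links the relevant vertex or edge in $\partial \Delta$. You instead derive the decompositions from a separation argument (which side of $S$ the cone vertex $e$ lies on) and identify the unions by an Euler-characteristic count rather than by inspection. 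Your route buys something the paper's proof does not explicitly deliver: the standard-position argument exhibits only one decomposition per case, whereas your side-of-$e$ dichotomy explains why exactly the two listed unions occur, which is what the biconditional literally asserts; conversely, your gluing verification via Lemma \ref{sets} in the other direction is redundant, since $S$ is by hypothesis an embedded surface equal to the union, so the arcs on internal faces already match.

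One caveat on the step you yourself flag as the main obstacle: the claim that the side assignment of $e$ ``pins down $S \cap \Delta_X$ as a single normal disk'' does not follow from the vertex partition alone, since parity permits, for example, an even number of additional parallel triangles linking $e$ inside a sub-tetrahedron, with the regions between them assigned to the far side of $S$. To close this you need an argument your sketch does not contain: every quadrilateral, and every triangle not linking $e$, in any $\Delta_X$ has a boundary arc on a face of $\partial \Delta$, while $\partial S$ meets each boundary face of $\Delta$ in at most one arc of known type; hence any extra piece must be an $e$-linking triangle, and such pieces can glue only to one another, assembling into vertex-linking spheres about $e$ --- closed components that cannot occur in the connected disk $S$ of parts (i) and (ii). With that connectivity argument supplied, your case bookkeeping is accurate (your side-splitting reproduces both decompositions in (i) and (ii) exactly as stated in the lemma) and the proof goes through.
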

\begin{proof}
If $S$ is a $\tau$-normal triangle in the tetrahedron $\Delta$, linking vertex $d$, then after a $\tau$-normal isotopy we may assume that $S = \del B(d) \cap \Delta$, where $B(d)$ is a small ball neighbourhood of $d$ in $M$. This is shown in Figure \ref{Fig2} (i) a. Then, $S$ intersects the faces of $\tau'$ as shown in Figure \ref{Fig2} (i) b, so that $S = (A, T_d) \cup (B, T_d) \cup (C, T_d)$. Conversely, if $S = (A, T_d) \cup (B, T_d) \cup (C, T_d)$ (Figure \ref{Fig2} (i) b) or $S = (A, Q_{de}) \cup (B, Q_{de}) \cup (C, Q_{de}) \cup (D, T_e)$ (Figure \ref{Fig2} (ii) b.), then $S$ is a properly embedded disk in $\Delta$ with $\del S$ a circle in $\del \Delta$ linking vertex $d$. So, $S$ is a $\tau$-normal triangle linking vertex $d$.\\

If $S$ is a $\tau$-normal quadrilateral in the tetrahedron $\Delta$, linking edge $ad$, then after a $\tau$-normal isotopy we may assume that $S = \del B(ad) \cap \Delta$, where $B(ad)$ is a small ball neighbourhood of $ad$ in $M$. This is shown in Figure \ref{Fig2} (iii) a. Then, $S$ intersects the faces of $\tau'$ as shown in Figure \ref{Fig2} (iii) b, so that $S = (D, T_a) \cup (B,
Q_{ad}) \cup (C, Q_{ad}) \cup (A, T_d)$. Conversely if $S = (D, T_a) \cup (B, Q_{ad}) \cup (C, Q_{ad}) \cup (A, T_d)$ (Figure \ref{Fig2} (iii) b) or $S = (B, T_c) \cup (D, Q_{bc}) \cup (A, Q_{bc}) \cup (C, T_b)$(corresponding to a quadrilateral linking the edge $bc$), then $S$ is a properly embedded disk in $\Delta$ with $\del S$ a circle in $\del \Delta$ linking the edge $ad$. So, $S$ is a $\tau$-normal quadrilateral linking edge $ad$.\\

If $S$ is a vertex linking sphere linking vertex $e$, then $S = \del B(e)$ where $B(e)$ is a small ball-neighbourhood of $e$ in $M$. So that $S = (A, T_e) \cup (B, T_e) \cup(C, T_e) \cup(D, T_e)$. Conversely, if $S =  (A, T_e) \cup (B, T_e) \cup(C, T_e) \cup(D, T_e)$ then it is easy to see that $S = \del B(e)$ and therefore $S$ is a vertex-linking sphere in $\tau'$ linking vertex $e$.
\end{proof}

We now give a proof of Theorem \ref{mainthm}.
\begin{proof}
If $F$ is $\tau$-normal then by lemma \ref{disks} it follows that $F$ is a union of $\tau'$-normal disks and hence is $\tau'$-normal as well.

To prove the converse, let $S$ be a connected component of $F\cap \Delta$. We shall show in Claims 1 and 2 that if $S$ contains a $\tau'$-normal triangle, then $S$ must either be a $\tau$-normal disk or a vertex-linking sphere. In Claim 3 we shall show that $S$ is not the union of $\tau'$-normal quadrilaterals. So every component of $F \cap \Delta$ is either a $\tau$-normal disk or a $\tau'$ vertex-linking
sphere. Thus we would have shown that any $\tau'$-normal surface in $M$ is either a $\tau$-normal surface
or it has a component which is a $\tau'$ vertex-linking sphere.\\

\begin{claim}
If $S' \subset S$ is a $\tau'$-normal triangle with coordinates $(X, T_e)$ for some $X
\in \{A, B, C, D \}$ then $S$ is either a vertex linking sphere
in $\tau'$ linking vertex $e$ or $S$ is a $\tau$-normal
triangle.\\

Without loss of generality, assume $X=D$. The boundary $\del S'$ is composed of normal arcs linking vertex $e$, i.e., for $Y \in \{A, B,
C \}$, $S' \cap \Delta_Y$ gives a normal arc in $\Lambda_e$.

If $S' \cap \Delta_A$ meets a $\tau'$-normal triangle $T$ in
$\Delta_A$, then by lemma \ref{sets} (i), $[T] = (A, T_e)$. Let
$S'' = S' \cup T$. Now for $Y \in \{ B, C \}$; $S'' \cap \Delta_Y \in
\Lambda_e * \Lambda_e$, therefore by lemma \ref{sets} (iv) $S''$
meets normal triangles with coordinates $(B, T_e)$ and $(C, T_e)$. So as $S$ is connected, $S =
(A, T_e) \cup (B, T_e) \cup (C, T_e) \cup (D, T_e)$, therefore by
lemma \ref{disks}, $S$ is a vertex-linking sphere linking vertex
$e$.

If $S' \cap \Delta_A$ meets a $\tau'$-normal quadrilateral $Q$ in
$\Delta_A$ , then by lemma \ref{sets} (iii), $[Q] = (A, Q_{de})$.
Let $S'' = S' \cup Q$. Now for $Y \in \{ B, C \}$; $S'' \cap \Delta_Y \in
\Lambda_e * \Lambda_{de}$, therefore by lemma \ref{sets} (v),
$S''$ meets normal quadrilaterals with coordinates $(B, Q_{de})$ and $(C,
Q_{de})$. So as $S$ is connected, $S = (A, Q_{de}) \cup (B, Q_{de})\cup (C,
Q_{de}) \cup (D, T_e)$, therefore by lemma \ref{disks}, $S$ is a
$\tau$-normal triangle (linking vertex $d$).
\end{claim}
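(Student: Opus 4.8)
The plan is to read off, one sub-tetrahedron at a time, which $\tau'$-normal disks must occupy $\Delta_A, \Delta_B, \Delta_C$ once a single triangle linking the central vertex $e$ is known to lie in the fourth sub-tetrahedron, and then to use connectedness of $S$ together with Lemma \ref{disks} to name $S$. By the symmetry of the four sub-tetrahedra about $e$ I take $X = D$, so $S' = (D, T_e)$ is a triangle linking $e$ in $\Delta_D = [a,b,c,e]$; each of its three boundary arcs lies in one of the interior faces $[a,b,e], [a,c,e], [b,c,e]$ through $e$ and therefore lies in $\Lambda_e$, and each is glued to the boundary of a $\tau'$-normal disk in one of $\Delta_A, \Delta_B, \Delta_C$.

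First I would examine the disk of $\Delta_A$ that $S'$ meets along the shared face $[b,c,e]$. A $\tau'$-normal disk is a triangle or a quadrilateral, and Lemma \ref{sets} pins down its coordinates in each case: by (i), if it is a triangle then the gluing arc being in $\Lambda_e$ forces it to link $e$, giving $(A, T_e)$; by (iii), if it is a quadrilateral then the arc, lying in a face of $\Delta_A$ and linking $e$, forces it to link the edge $de$, giving $(A, Q_{de})$. This two-way fork is exactly what produces the two alternatives of the claim, so I split the argument accordingly.

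In the triangle case I set $S'' = S' \cup (A, T_e)$ and inspect the faces $S''$ shares with $\Delta_B$ and with $\Delta_C$. Along each such pair of faces the two contributing arcs (one from $S'$, one from $(A, T_e)$) join across the interior edge through $e$ into a single $\tau'$-normal path in $\Lambda_e * \Lambda_e$, so Lemma \ref{sets}(iv) forces $(B, T_e)$ and $(C, T_e)$. Connectedness of $S$ then exhausts it, yielding $S = (A,T_e) \cup (B,T_e) \cup (C,T_e) \cup (D,T_e)$, which is a vertex-linking sphere at $e$ by Lemma \ref{disks}(iii). The quadrilateral case is parallel: with $S'' = S' \cup (A, Q_{de})$ the analogous paths along the faces shared with $\Delta_B, \Delta_C$ lie in $\Lambda_e * \Lambda_{de}$, so Lemma \ref{sets}(v) forces $(B, Q_{de})$ and $(C, Q_{de})$, and $S = (A,Q_{de}) \cup (B,Q_{de}) \cup (C,Q_{de}) \cup (D,T_e)$ is a $\tau$-normal triangle linking $d$ by Lemma \ref{disks}(i).

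The step I expect to demand the most care is the concatenation claim underlying each application of Lemma \ref{sets}(iv),(v): one must check that the arc supplied by $S'$ and the arc supplied by the $\Delta_A$-disk actually share an endpoint on the correct interior edge emanating from $e$, so that their union is a genuine $\tau'$-normal path crossing that edge and hence lands in $\Lambda_e * \Lambda_e$ or $\Lambda_e * \Lambda_{de}$ rather than degenerating into a single face. This is a matter of tracking where the boundary circles cross the interior edges at $e$; once it is settled, connectedness of $S$ forecloses any further components and Lemma \ref{disks} supplies the final identification.
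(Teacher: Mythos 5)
Your proposal is correct and follows the paper's own argument essentially step for step: the same reduction to $X=D$, the same two-way fork via Lemma \ref{sets} (i) and (iii) for the disk glued along $\Delta_D \cap \Delta_A = [b,c,e]$, the same propagation to $\Delta_B$ and $\Delta_C$ via Lemma \ref{sets} (iv) and (v) applied to paths in $\Lambda_e * \Lambda_e$ and $\Lambda_e * \Lambda_{de}$, and the same closing appeal to connectedness and Lemma \ref{disks}. The concatenation point you flag (that the two arcs genuinely share an endpoint crossing an interior edge at $e$, so the path is not contained in a single face) is indeed the only detail the paper leaves implicit, and your reading of it matches the intended argument.
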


\begin{claim}
If $S' \subset S$ is a $\tau'$-normal triangle with coordinates $(X, T_w)$, for $X \in
\{A, B, C, D\}$, where $w$ is a vertex in $\Delta_X$ other than
$e$, then $S$ is either a
$\tau$-normal triangle or a $\tau$-normal quadrilateral.\\

Without loss of generality, assume $X=D$ and $w=a$. Then $S' \cap
\Delta_B$ and $S' \cap \Delta_C$ are in $\Lambda_a$, while $S'
\cap \Delta_A = \phi$.

If $S' \cap \Delta_B$ meets a $\tau'$-normal triangle $T$, then by
lemma \ref{sets} (i), $[T] = (B, T_a)$. Let $S'' = S' \cup T$. Then
$S'' \cap \Delta_C \in \Lambda_a * \Lambda_a$ therefore by lemma
\ref{sets} (iv), $S''$ meets a normal triangle with coordinates $(C, T_a)$ in
$\Delta_C$. So we have $S = (D, T_a) \cup (B, T_a) \cup (C, T_a)$,
therefore by Lemma \ref{disks}, $S$ is a $\tau$-normal triangle
(linking vertex $a$).

If $S' \cap \Delta_B$ meets a $\tau'$-normal quadrilateral $Q$, then
by lemma \ref{sets} (iii), $[Q] = (B, Q_{ad})$. We have $( S' \cup
Q ) \cap \Delta_C \in \Lambda_a * \Lambda_{ad}$ therefore by
lemma \ref{sets} (v), $S' \cup Q$ meets $\Delta_C$ in a
quadrilateral $Q'$ with $[Q'] = (C, Q_{ad})$. Let $S'' = S \cup Q \cup
Q'$. Then $S'' \cap \Delta_A \in \Lambda_d * \Lambda_d$. So that
by lemma \ref{sets} (iv), $S''$ meets $\Delta_A$ in a normal
triangle with coordinates $(A, T_d)$. Therefore $S = (D, T_a) \cup (B, Q_{ad})
\cup (C, Q_{ad}) \cup (A, T_d)$ and by Lemma \ref{disks}, $S$
is a $\tau$-normal quadrilateral (linking edge $ad$).
\end{claim}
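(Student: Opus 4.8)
\emph{Proof proposal.} The plan is to exploit the connectedness of $S$ together with the combinatorial rigidity supplied by Lemma \ref{sets}, tracing how the given triangle $S'$ forces the remaining components of $S$ in the adjacent sub-tetrahedra. Since $e$ is the only vertex distinguished by the subdivision while $a, b, c, d$ enter symmetrically, and since the four sub-tetrahedra $\Delta_A, \dots, \Delta_D$ are interchangeable under relabelling the original vertices, I would first reduce to the representative case $X = D$ and $w = a$, so that $S' = (D, T_a)$.

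Next I would record the boundary data of $S'$. The arcs of $\del S'$ link $a$ and hence lie in the three faces of $\Delta_D = [a, b, c, e]$ that contain $a$; two of these, namely $[a, c, e]$ and $[a, b, e]$, are the internal faces shared with $\Delta_B$ and $\Delta_C$, so $S' \cap \Delta_B$ and $S' \cap \Delta_C$ are arcs in $\Lambda_a$, while $S' \cap \Delta_A = \phi$ because $a$ is not a vertex of $\Delta_A$. I would then follow $S$ across the shared face $[a, c, e]$ into $\Delta_B$ and split into two cases according to whether the adjacent normal disk there is a triangle or a quadrilateral.

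In the triangle case, Lemma \ref{sets}(i) applied to the incoming arc in $\Lambda_a$ forces the disk to be $(B, T_a)$; the union $S' \cup (B, T_a)$ then presents to $\Delta_C$ a path in $\Lambda_a * \Lambda_a$, so Lemma \ref{sets}(iv) produces $(C, T_a)$, and by connectedness $S = (D, T_a) \cup (B, T_a) \cup (C, T_a)$, a $\tau$-normal triangle linking $a$ by Lemma \ref{disks}. In the quadrilateral case, Lemma \ref{sets}(iii) forces $(B, Q_{ad})$; the union now presents a path in $\Lambda_a * \Lambda_{ad}$ to $\Delta_C$, so Lemma \ref{sets}(v) yields $(C, Q_{ad})$. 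The closing step is to observe that each of the two quadrilaterals $(B, Q_{ad})$ and $(C, Q_{ad})$ meets $\Delta_A$ in an arc of $\Lambda_d$ (rather than $\Lambda_a$, since $a \notin \Delta_A$), giving a path in $\Lambda_d * \Lambda_d$; then Lemma \ref{sets}(iv) produces $(A, T_d)$, and $S = (D, T_a) \cup (B, Q_{ad}) \cup (C, Q_{ad}) \cup (A, T_d)$ is a $\tau$-normal quadrilateral linking $ad$ by Lemma \ref{disks}.

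The step I expect to be the main obstacle is the bookkeeping in the quadrilateral case: one must verify that a quadrilateral $(B, Q_{ad})$ hands to $\Delta_A$ an arc linking the vertex $d$ rather than $a$, which is precisely what allows the assembly to close up on the triangle $(A, T_d)$. Correctly identifying the linking type of each arc passed from one sub-tetrahedron to the next, and confirming that connectedness leaves no room for additional components, is where all the care is needed; once these arc-types are pinned down, every reduction is an immediate appeal to Lemma \ref{sets} followed by Lemma \ref{disks}.
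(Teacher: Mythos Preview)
Your proposal is correct and follows essentially the same route as the paper: the same WLOG reduction to $S'=(D,T_a)$, the same identification of the boundary arcs in $\Lambda_a$, the same two-case split in $\Delta_B$ with identical appeals to Lemma~\ref{sets}(i),(iii),(iv),(v), and the same closing observation that the two $Q_{ad}$-quadrilaterals feed $\Lambda_d*\Lambda_d$ into $\Delta_A$. Your added remark that $a\notin\Delta_A$ is exactly the reason the final arc-type is $\Lambda_d$, which the paper leaves implicit.
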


\begin{claim}
$S$ is not a union of $\tau'$-normal quadrilaterals.\\

Without loss of generality we assume $S \cap \Delta_D \neq \phi$.
Let $Q$ be a normal quadrilateral in $S \cap \Delta_D$. Then as
the normal arcs in $\del Q$ link distinct vertices, there exists
an arc $\lambda \subset \del Q$ that belongs to $\Lambda_e$.
Assume, without loss of generality, that $Q \cap \Delta_A = \lambda \in
\Lambda_e$. By lemma \ref{sets} (iii), $Q$ meets $\Delta_A$ in a
normal quadrilateral $Q_A$ with $[Q_A] = (A, Q_{de})$. As $Q_A \cap \Delta_B
\in \Lambda_{de}$, by lemma \ref{sets} (ii), $Q_A$ meets $\Delta_B$ in
quadrilateral $Q_B$ with $Q_B = (B, Q_{de})$. Let $S' = Q_A \cup Q_B$.
Then $S' \cap \Delta_D \in \Lambda_e * \Lambda_e$, so by lemma
\ref{sets} (iv), $S'$ must meet $\Delta_D$ in a triangle with coordinates $(D, T_e)$
contradicting our assumption that $S$ is composed solely of
$\tau'$-normal quadrilaterals.
\end{claim}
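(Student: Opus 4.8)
The plan is to argue by contradiction, exploiting the fact that the new vertex $e$ is a vertex of all four sub-tetrahedra $\Delta_A, \Delta_B, \Delta_C, \Delta_D$. Suppose $S$ is a union of $\tau'$-normal quadrilaterals. Since $S$ is non-empty and connected, by the symmetry of the four sub-tetrahedra I may assume it meets $\Delta_D$, and I fix a quadrilateral $Q \subset S \cap \Delta_D$. Recall that the four boundary arcs of a normal quadrilateral link four distinct vertices (the first observation in the proof of Lemma \ref{sets}). As $a, b, c, e$ are the four vertices of $\Delta_D$, exactly one arc $\lambda \subset \del Q$ lies in $\Lambda_e$; and since $e$ is not a vertex of the outer face $[a,b,c]$, this arc lies in one of the inner faces of the refinement, and hence is shared with a neighbouring sub-tetrahedron.

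Next I would convert this ``links-$e$'' datum into definite edge coordinates and propagate it around $e$, exactly in the style of the preceding two claims. Crossing $\lambda$ into the adjacent sub-tetrahedron, Lemma \ref{sets}(iii) identifies the quadrilateral there as one linking an edge through $e$ (a quadrilateral of type $Q_{de}$), and Lemma \ref{sets}(ii) then carries this $Q_{de}$-labelling across the next inner face to the following sub-tetrahedron. The reason the propagation goes through is structural: every inner face of the refinement contains $e$, so the condition of linking $e$ (equivalently, of linking an edge through $e$) is preserved as one travels from one sub-tetrahedron to the next around the interior vertex.

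The key step, which I expect to be the main obstacle, is the bookkeeping needed to see that after passing through two adjacent sub-tetrahedra the two resulting quadrilaterals contribute arcs of $\Lambda_e$ on two faces of $\Delta_D$ that meet along a common edge of $\tau'$ through $e$, so that their concatenation is a path in $\Lambda_e * \Lambda_e$ lying on a single disk of $S \cap \Delta_D$. Granting this, Lemma \ref{sets}(iv) forces that disk to be a \emph{triangle} linking $e$, contradicting the assumption that $S$ consists only of quadrilaterals. The real content is purely geometric: one cannot surround the interior vertex $e$ using quadrilaterals alone, since closing up the surface around $e$ necessarily produces a triangle that links it.
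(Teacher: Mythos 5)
Your proposal follows the paper's own proof of this claim essentially step for step: pick a quadrilateral $Q$ in $S \cap \Delta_D$, use the fact that its four boundary arcs link the four distinct vertices $a, b, c, e$ to extract the unique arc in $\Lambda_e$ on an inner face, propagate via Lemma \ref{sets} (iii) and then (ii) to quadrilaterals with coordinates $(A, Q_{de})$ and $(B, Q_{de})$, and apply Lemma \ref{sets} (iv) to their concatenated $\Lambda_e * \Lambda_e$ trace on $\Delta_D$ to force a triangle $(D, T_e)$, the desired contradiction. The bookkeeping step you flagged does go through --- the arc of $Q_A$ on the face $[b,c,e]$ and the arc of $Q_B$ on the face $[a,c,e]$ share the endpoint of $Q_A$'s and $Q_B$'s common arc lying on the edge $ce$, so they concatenate into a normal path on the boundary of a single disk of $S \cap \Delta_D$ --- and the paper asserts this at the same level of detail as you do.
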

\end{proof}

\section{The prism triangulation of N(F)}
Let $F$ be a closed oriented connected surface lying in an oriented 3-manifold $M$. Denote by $I$ the closed interval $[-1,1]$. Let $N(F) \cong F \times I$ be a regular neighbourhood of $F$. In this section we define a triangulation of $N(F)$, which is such that any closed connected normal surface lying in $N(F)$ is normally isotopic to $F \times \{0\}$.\\

Take a triangular disc $T$ with oriented edges. Assume the edges
are not cyclically oriented. Label the vertices $\{ v_0, v_1, v_2 \}$ of $T$ in such a way that the
edges are oriented as $\{ v_0 v_1, v_1 v_2, v_0 v_2 \}$.
In $T \times I$, let $T \times \{ -1 \}$ be identified with $[v_0,
v_1, v_2]$ labeled as above and $T \times \{ 1 \} = [w_0, w_1,
w_2]$, where $v_i$ and $w_i$ have the same image under the
projection $T \times I \rightarrow T$. Then we get a triangulation of $T \times I$, using the tetrahedra $\Delta_0 = [v_0, w_0, w_1, w_2]$, $\Delta_1 = [v_0, v_1, w_1, w_2]$ and $\Delta_2 = [v_0, v_1, v_2, w_2]$. We call this the prism triangulation of $T \times I$. (See proof of Theorem 2.10 \cite{Ha} for details.)

\begin{lemma}\label{commonprism}
Let $T_1$ and $T_2$ be triangles with non-cyclic oriented edges that
intersect in an edge $e = T_1 \cap T_2$. Assume the orientation
on the edge $e$ coming from $T_1$ is the same as that coming from
$T_2$. Let $\tau_1$ and $\tau_2$ be the prism triangulations of
$T_1 \times I$ and $T_2 \times I$ respectively. Then $\tau =
\tau_1 \cup \tau_2$ is a triangulation of $(T_1 \cup T_2)
\times I$.
\end{lemma}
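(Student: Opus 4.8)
The plan is to verify directly that the two prism triangulations $\tau_1$ and $\tau_2$ agree on their common region, namely the product $e \times I$, so that their union is a genuine triangulation of $(T_1 \cup T_2) \times I$. The key point is that a union of two triangulations along a shared subcomplex is itself a triangulation precisely when the induced simplicial structures on that shared subcomplex coincide. Here the shared region is $e \times I$, where $e = T_1 \cap T_2$ is the common edge. So the entire content of the lemma reduces to checking that the prism triangulation of $T_1 \times I$ restricted to $e \times I$ matches the prism triangulation of $T_2 \times I$ restricted to $e \times I$.

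First I would examine how the prism triangulation of a single $T \times I$ restricts to a boundary face of the form $(\text{edge}) \times I$. From the definition, $T \times I$ is cut into three tetrahedra $\Delta_0, \Delta_1, \Delta_2$, and the square face $e \times I$ (a quadrilateral with corners the two endpoints of $e$ at heights $\pm 1$) is triangulated into two triangles by a diagonal. The crucial observation is that which diagonal appears is dictated entirely by the labelling of the vertices of $e$ as $\{v_i, v_j\}$ with $i < j$ in the chosen non-cyclic orientation: the diagonal runs from the lower-indexed vertex at the top, $w_i$, to the higher-indexed vertex at the bottom, $v_j$ (equivalently, the square $[v_i, v_j, w_j, w_i]$ is split along $v_j w_i$). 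I would write out this diagonal explicitly from the tetrahedra for each of the three edges $v_0v_1$, $v_1v_2$, $v_0v_2$.

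Next I would apply this to the shared edge $e$. The hypothesis is that the orientation of $e$ induced from $T_1$ agrees with that induced from $T_2$. This means that when we label the vertices of $e$ compatibly with the non-cyclic orientations of $T_1$ and of $T_2$, the two endpoints of $e$ receive the same relative ordering (the tail of the oriented edge is the lower-indexed endpoint in both triangles). Since the diagonal on $e \times I$ is determined solely by this relative ordering of the two endpoints, the diagonals chosen by $\tau_1$ and by $\tau_2$ on $e \times I$ are identical. Hence the two triangulations restrict to the same simplicial structure on $e \times I$, and $\tau = \tau_1 \cup \tau_2$ is well-defined as a simplicial complex, giving a triangulation of $(T_1 \cup T_2) \times I$.

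The main obstacle, and the one place where the orientation hypothesis is indispensable, is precisely this matching of diagonals: if the orientation induced on $e$ from $T_1$ were opposite to that from $T_2$, the two triangles would assign opposite diagonals to the square $e \times I$, the faces would fail to match, and the union would not be a triangulation. I would therefore present the core of the argument as a careful bookkeeping of the induced vertex labels on $e$ and the resulting diagonal, emphasizing that the shared orientation forces the shared diagonal. The remaining verifications, that away from $e \times I$ the two triangulations occupy disjoint interiors and that every simplex of one meets every simplex of the other in a common face, are routine consequences of $T_1$ and $T_2$ meeting only along $e$.
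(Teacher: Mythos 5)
Your proposal follows essentially the same route as the paper's proof: reduce the lemma to checking that the two prism triangulations induce the same simplicial structure on the shared square $e \times I$, and observe that the diagonal splitting that square is determined entirely by the induced orientation of $e$, so matching orientations force matching diagonals. One concrete detail is backwards, though it does not damage the logic: you assert the square over $v_i v_j$ (with $i < j$) is split along $v_j w_i$, but the edges of the prism triangulation lying in $\del T \times I$ are $v_0 w_0$, $v_1 w_1$, $v_2 w_2$, $v_0 w_1$, $v_0 w_2$, $v_1 w_2$, so the diagonal over $v_i v_j$ is in fact $v_i w_j$ --- running from the tail of the oriented edge at the bottom to its head at the top, which is precisely the paper's diagonal from $(-1,-1)$ to $(1,1)$. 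Since your argument only uses that the choice of diagonal depends solely on the orientation of $e$ (not on which of the two diagonals it is), the slip is harmless, and the explicit verification you propose over the three edges $v_0 v_1$, $v_1 v_2$, $v_0 v_2$ would have corrected it.
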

\begin{proof}
In the prism triangulation of T, we note that the 1-skeleton lies in $\del T \times I$
and is the union of $\del T \times \del I$ and the edges
$\{ v_0 w_0, v_1 w_1, v_2 w_2, v_0 w_1, v_0 w_2, v_1 w_2 \}$.
Recall that the edges of $T$ were oriented as $\{ v_0 v_1, v_1
v_2, v_0 v_2 \}$. So given an oriented edge $e = [-1,1]$ of $T$,
with $e$ oriented in the direction from -1 to 1, $e \times I$ is
the union of two triangles given by the join of $e \times \{ -1\}$
with the point $(1,1)$ and the join of $e \times \{1\}$ with the point $(-1,-1)$. In particular, the triangles divide the square $e \times I$ along the diagonal from $(-1,-1)$ to $(1,1)$.

Therefore if two triangles $T_1$ and $T_2$ with oriented edges
intersect in an edge $e = T_1 \cap T_2$, where the orientation of
$e$ coming from $T_1$ is the same as that from $T_2$, then the prism
triangulation of $T_1 \times I$ and $T_2 \times I$ agree on the
intersection $e \times I$. So by taking the union $\tau_1 \cup \tau_2$ we get a
triangulation on $(T_1 \cup T_2) \times I$.
\end{proof}

We can now define the prism triangulation on $F \times I$. Firstly, we claim that given a triangulation $\tau$ of $F$ there exists a refinement $\tau'$ of $\tau$ and an orientation of the edges of $\tau'$ such that no triangle has edges oriented cyclically.

Give any orientation to the edges of the 1-skeleton of $\tau$. Let $N_\tau$ be the number of
triangles of $\tau$ with edges oriented cyclically. If $N_\tau >
0$, then take a triangle $T = [a, b, c]$ in $\tau$ with
cyclically oriented edges $\{ ab, bc, ca \}$. Let $d$ be a point
in the interior of $T$. Define the triangulation $\tau'$ as a
refinement of $\tau$ given by subdividing $T$ into the triangles
$[a, b, d]$, $[b, c, d]$, $[c, a, d]$. Orient the newly
introduced edges of the 1-skeleton as $da$, $db$ and $dc$. Then
none of the triangles in the subdivision of $T$ has cyclically
oriented edges. Therefore the number of triangles with cyclically
oriented edges in $\tau'$, $N_{\tau'} = N_\tau - 1$. So after
$N_\tau$ such refinements we obtain a triangulation of $F$ with no triangles having edges oriented cyclically.

Now by lemma \ref{commonprism}, we can patch up the prism triangulations of triangles of $F$ to get a triangulation of $F \times I$. We
call this the prism triangulation of $F \times I$, relative to the triangulation $\tau'$ of $F$.\\

We now show that any properly embedded normal surface in the prism triangulation of $T \times I$ with boundary in $\del T \times I$ is normally isotopic to $T \times \{ 0 \}$.
\begin{lemma} \label{normalinprism}
Let $T = [v_0, v_1, v_2]$ be a triangle with non-cyclic edges. Let $\tau$ be the prism
triangulation on $T \times I$. Let $S$ be a properly embedded
normal surface with $\del S \subset \del T
\times I$. Then $S$ is normally isotopic to $T \times
\{ 0 \}$.
\end{lemma}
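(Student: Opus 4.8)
The plan is to argue entirely combinatorially by enumerating the normal disk types available in each of the three tetrahedra $\Delta_0, \Delta_1, \Delta_2$ of the prism and showing that the hypothesis $\del S \subset \del T \times I$ leaves essentially no freedom. First I would record the faces of the prism triangulation: the bottom face $[v_0,v_1,v_2]$ of $\Delta_2$, the top face $[w_0,w_1,w_2]$ of $\Delta_0$, the two interior faces $[v_0,w_1,w_2] = \Delta_0 \cap \Delta_1$ and $[v_0,v_1,w_2] = \Delta_1 \cap \Delta_2$, and the six triangular faces making up the side annulus $\del T \times I$. The key consequence of the hypothesis is that, since $\del S$ avoids the interiors of the top and bottom faces (which are boundary faces of the prism lying outside $\del T \times I$), the surface $S$ has no normal arcs on the top face $[w_0,w_1,w_2]$ nor on the bottom face $[v_0,v_1,v_2]$.

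Next I would use this to pin down $S \cap \Delta_0$ and $S \cap \Delta_2$. Running through the four triangle types and three quadrilateral types in $\Delta_2 = [v_0,v_1,v_2,w_2]$, every one except the triangle linking $w_2$ produces an arc on the bottom face $[v_0,v_1,v_2]$; hence $S \cap \Delta_2$ is a disjoint union of triangles linking $w_2$. Symmetrically, in $\Delta_0 = [v_0,w_0,w_1,w_2]$ every disk type except the triangle linking $v_0$ meets the top face, so $S \cap \Delta_0$ is a union of triangles linking $v_0$. These triangles induce, on the interior face $[v_0,v_1,w_2]$, only arcs in $\Lambda_{w_2}$ (from the $\Delta_2$ side), and on the interior face $[v_0,w_1,w_2]$, only arcs in $\Lambda_{v_0}$ (from the $\Delta_0$ side).

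I would then propagate these boundary conditions into $\Delta_1 = [v_0,v_1,w_1,w_2]$. Because normal arcs must match across the shared interior faces, any disk of $S \cap \Delta_1$ must restrict on $[v_0,v_1,w_2]$ to an arc of $\Lambda_{w_2}$ (or none) and on $[v_0,w_1,w_2]$ to an arc of $\Lambda_{v_0}$ (or none); checking the seven disk types against these two requirements via Lemma \ref{sets} singles out the unique quadrilateral separating $\{v_0,v_1\}$ from $\{w_1,w_2\}$. Matching arc counts across the two interior faces then forces $S \cap \Delta_0$, $S \cap \Delta_1$ and $S \cap \Delta_2$ to contain equal numbers $n$ of their respective disks, which assemble into $n$ parallel horizontal disks, each being precisely the decomposition of a level $T \times \{t\}$. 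Connectedness of $S$ gives $n = 1$, and the resulting surface is normally isotopic to $T \times \{0\}$.

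The step I expect to be the main obstacle is the case analysis in the two end tetrahedra: making the ``no arcs on top/bottom face'' reduction airtight requires verifying that every non-horizontal triangle and every quadrilateral genuinely meets the offending boundary face, and then confirming that there is no normal disk in $\Delta_1$ confined to the two side faces alone (there is none, since the boundary of a normal disk crosses at least three faces of its tetrahedron). Once these enumerations are in hand, the arc-matching across interior faces and the final counting argument are routine.
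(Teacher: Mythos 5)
Your proposal is correct and takes essentially the same approach as the paper: both restrict the normal disk types forced by the boundary condition to triangles linking $v_0$ in $\Delta_0$, triangles linking $w_2$ in $\Delta_2$, and quadrilaterals linking $v_0v_1$ in $\Delta_1$, then match counts across the interior faces and invoke connectedness to get a single horizontal disk normally isotopic to $T \times \{0\}$. The only cosmetic difference is that the paper constrains $\Delta_1$ directly (its disks must separate the opposite edges $v_0v_1$ and $w_1w_2$ lying in $T \times \del I$), whereas you derive the same constraint by propagating the admissible arc types from the end tetrahedra through the two interior faces.
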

\begin{proof}
In the prism $T \times I$, let $T \times \{-1\} = [v_0, v_1, v_2]$
and $T \times \{ 1 \} = [w_0, w_1, w_2]$ with $v_i$ and $w_i$
projecting to the same point on $T$. Then, the prism
triangulation of $T \times I$ is composed of the tetrahedra
$\Delta_0 = [ v_0, w_0, w_1, w_2 ]$, $\Delta_1 = [ v_0, v_1,
w_1, w_2 ]$ and $\Delta_2 = [ v_0, v_1, v_2, w_2 ]$. \\

Observe that $\Delta_0$ contains the face $[w_0 w_1 w_2] = T \times \{ 1 \}$, while $\Delta_2$ contains the face $[v_0 v_1 v_2] = T \times \{-1\}$. As $S$ does not intersect $T \times \del I$, $S \cap \Delta_0$ is parallel to $[w_0 w_1 w_2]$ and is therefore a union of triangles linking $v_0$. Similarly, $S \cap \Delta_2$ is a union of triangles linking $w_2$. The tetrahedron $\Delta_1$ has a pair of opposing edges $v_0 v_1$ and $w_1 w_2$ that lie in $T \times \del I$. Therefore $S \cap \Delta_1$ is a union of normal disks that separates these pair of edges and is therefore a union of normal quadrilaterals linking edge $v_0 v_1$.

Note that $\Delta_0 \cap \Delta_1 = [v_0 w_1 w_2]$ and $\Delta_1 \cap \Delta_2 = [v_0 v_1 w_2]$. So by the matching equations, the number of triangles in $\Delta_0 \cap S$ equals the number of quadrilaterals in $\Delta_1 \cap S$ which is the same as the number of triangles in $\Delta_2 \cap S$.

Let $T_0$ be a triangle in $\Delta_0 \cap S$, $Q_1$ a quadrilateral in $\Delta_1 \cap S$ and $T_2$ a triangle in $\Delta_2 \cap S$ such that $T_0$ meets $Q_1$ in $\Delta_0 \cap \Delta_1$ and $Q_1$ meets $T_2$ in $\Delta_1 \cap \Delta_2$. Then $T_0 \cup Q_1 \cup T_2 = S'$ is a connected properly embedded normal surface in $T \times I$ that projects homeomorphically onto $T$ and so $S'$ is normally isotopic to $T \times \{0\}$. Any normal surface in $T \times I$ that does not intersect $T \times \del I$ is therefore, a disjoint union of discs parallel to $T \times \{0\}$. As $S$ is a normal connected surface, $S = S'$ as required.
\end{proof}

\begin{theorem} \label{insidenbhd}
Let $F$ be a closed oriented connected surface. Let $\tau$ be a prism
triangulation of $N(F) \simeq F \times I$. Then any normal closed
connected surface $F' \subset N(F)$ is normally isotopic to $F
\times \{ 0 \}$.
\end{theorem}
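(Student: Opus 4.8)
The plan is to reduce the global statement about $F\times I$ to the local, triangle-by-triangle statement already established in Lemma \ref{normalinprism}. Since $F'$ is a closed connected normal surface, its restriction to each prism $T\times I$ (for $T$ a triangle of the base triangulation $\tau'$ of $F$) is a properly embedded normal surface. The first thing I would verify is that $F'$ does not meet $F\times\partial I=\partial N(F)$: because $F'$ is closed and properly embedded in the interior $N(F)$, and $F\times\partial I$ is the boundary, $F'$ lies entirely in the interior, so $F'\cap(T\times\partial I)=\emptyset$ for every triangle $T$. Hence the hypothesis of Lemma \ref{normalinprism} is met on each prism.

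Next I would apply Lemma \ref{normalinprism} prism by prism. For each triangle $T$ of $F$, every component of $F'\cap(T\times I)$ is, by that lemma together with its final sentence, a disc parallel to $T\times\{0\}$; more precisely it is of the form $T_0\cup Q_1\cup T_2$ projecting homeomorphically onto $T$ under $T\times I\to T$. The key structural point is that these local pieces must glue consistently across the shared faces $e\times I$ (where $e$ is an edge shared by two triangles $T_1,T_2$ of $F$). Because each local component projects homeomorphically onto its base triangle, its trace on a shared wall $e\times I$ is a single normal arc projecting homeomorphically onto $e$; two adjacent prisms therefore match their pieces along $e\times I$ in exactly one arc each, so the local sheets assemble into global sheets parallel to $F\times\{0\}$.

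From this I would conclude that $F'$ is a disjoint union of copies of $F\times\{0\}$, each one a horizontal section. Since $F'$ is connected, it is a single such copy, hence normally isotopic to $F\times\{0\}$, as claimed. The construction of the prism triangulation via Lemma \ref{commonprism} is what guarantees the triangulations agree on the shared walls $e\times I$, which is precisely what makes the gluing step legitimate.

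The main obstacle I anticipate is the gluing argument: ensuring that the horizontal discs in neighbouring prisms fit together into a globally embedded horizontal surface rather than some surface that is locally horizontal but globally wanders between levels. The cleanest way to handle this is to use the projection $\pi\co N(F)\cong F\times I\to F$ and argue that, since $\pi$ restricts to a homeomorphism on each local component and $F'$ is a closed surface, $\pi|_{F'}$ is a covering map onto $F$; connectedness of $F'$ together with the fact that a single component over each triangle already maps homeomorphically forces this covering to be trivial, so $F'$ is a section of $\pi$ and hence a horizontal copy of $F$. One must also rule out that $F'$ could be a nontrivial multi-sheeted cover or could fail to be a section near the shared walls, but the homeomorphism-onto-base property inherited from Lemma \ref{normalinprism} on each prism makes each sheet a genuine graph over $F$, closing the argument.
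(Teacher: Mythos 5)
Your proposal follows the paper's proof essentially verbatim: restrict $F'$ to each prism $T \times I$, observe that $F'$ closed gives $\del(F' \cap (T\times I)) \subset \del T \times I$ so that Lemma \ref{normalinprism} applies, and then use connectedness of $F'$ to assemble the horizontal discs into a copy of $F \times \{0\}$. If anything you are more careful than the paper: your covering-space discussion addresses the possibility that $F' \cap (T\times I)$ has several components (i.e.\ that $F'$ is a multi-sheeted cover of $F$, ruled out because the ordering of sheets in the $I$-direction kills the monodromy), a point the paper's proof dispatches with only the phrase ``the surface $F'$ is connected.''
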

\begin{proof}
Let $\tau'$ be a triangulation of $F$, and let $\tau$ be the prism triangulation of $N(F)$ relative to $\tau'$. Let $T$ be a
triangle in $\tau'$ then $\tau|_T$ is the prism triangulation on
$T \times I$. As $F'$ is normal in $\tau$ and is closed, $F' \cap (T
\times I)$ is a $\tau|_T$-normal properly embedded surface $S$
with $\del S \subset \del T \times I$ so by lemma \ref{normalinprism}, $S$ is normally isotopic to $T \times
\{ 0 \}$. As this is true for every triangle $T$ in the
triangulation $\tau'$ of $F$ and the surface $F'$ is connected,
$F'$ is normally isotopic to $F \times \{ 0 \}$.
\end{proof}

\section{Proof of Theorem \ref{incomprthm}}
As before, let $F$ be a closed oriented surface in a compact oriented 3-manifold $M$. Let $\tau_1$ be a triangulation of $N(F)$. In this section we firstly show that given any integer $W$, there exists an extension of $\tau_1$ to a triangulation $\tau$ of $M$ such that any $\tau$-normal surface $F'$ that does not lie in $N(F)$ has PL-area more than $W$. This is shown in lemma \ref{outsidenbhd}, using which we prove Theorem \ref{incomprthm}.

\begin{definition}
Let $\Gamma$ be a simplicial complex of dimension $n$. Then $|\Gamma|$ denotes the number of $n$-cells in $\Gamma$.
\end{definition}

\begin{definition}
Let $\tau$ be a triangulation of $M$. Let the $i$-weight of $F$ be defined as $w^{(i)}(F) = | F \cap \tau^{(i)}|$, where $\tau^{(i)}$ is the $i$-the skeleton of the triangulation $\tau$. Then, the PL-area of $F$ is given by the ordered pair $w(F) = (w^{(1)}(F), w^{(2)}(F))$.
\end{definition}

\begin{lemma}\label{weights}
Let $\phi$ be the refinement function (Definition \ref{refinement}) that gives the refinement of a tetrahedron $\Delta$ into 4 tetrahedra. Let $\tau$ be a triangulation of $\Delta$ consisting of the single tetrahedron $\Delta$. Let $\tau^n = \phi^n(\tau)$ be a triangulation of $\Delta$ obtained by taking $n$ iterates of $\phi$. Let $D$ be a $\tau$-normal disk. Then the 1-weight of $D$ in $\tau^n$ is greater than $n$.
\end{lemma}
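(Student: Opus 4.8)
The plan is to prove the sharper statement that the $1$-weight of $D$ strictly increases with each application of $\phi$, and then to run an integer induction on $n$. First I would note that, by iterating Lemma \ref{disks} (equivalently Theorem \ref{mainthm} applied with a constant scaling function), the disk $D$ is $\tau^{k}$-normal for every $k$; in particular, for each tetrahedron $\Delta'$ of $\tau^{k}$ the intersection $D\cap\Delta'$ is a union of $\tau^{k}$-normal triangles and quadrilaterals. I would also record the elementary observation that $\tau^{(1)}_{k}\subset\tau^{(1)}_{k+1}$, so that every intersection point of $D$ with the old $1$-skeleton survives; hence the increase in $1$-weight from $\tau^{k}$ to $\tau^{k+1}$ is exactly $|D\cap E_{k+1}|$, where $E_{k+1}$ denotes the set of edges of $\tau^{k+1}$ that are not edges of $\tau^{k}$.

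The heart of the argument is the claim that $|D\cap E_{k+1}|\geq 1$. By Definition \ref{refinement}, passing from $\tau^{k}$ to $\tau^{k+1}$ adjoins, inside each tetrahedron $\Delta'$ of $\tau^{k}$, a new interior vertex $e'$ together with the four edges joining $e'$ to the vertices of $\Delta'$; these edges constitute $E_{k+1}$. Since $D$ is a disk it meets at least one such $\Delta'$ in a normal disk $P$. A normal triangle separates the vertex it links from the remaining three vertices of $\Delta'$, and a normal quadrilateral separates the two vertices of the linked edge from the other two; in either case $P$ cuts $\Delta'$ into two balls, \emph{each containing at least one vertex}. The interior point $e'$ lies in one of these two balls, so I may choose a vertex $v$ of $\Delta'$ lying in the other ball. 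The new edge $e'v\subset\Delta'$ then joins points on opposite sides of $P$, so it must cross $P\subset D$; thus $D\cap E_{k+1}\neq\emptyset$.

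With the claim in hand the induction is routine. For the base case, any $\tau$-normal triangle or quadrilateral crosses at least three edges of $\Delta$, so the $1$-weight of $D$ in $\tau^{0}=\tau$ is at least $3>0$. For the inductive step, if the $1$-weight of $D$ in $\tau^{k}$ exceeds $k$, then being an integer it is at least $k+1$, and combining this with the claim and the nesting of $1$-skeletons gives a $1$-weight in $\tau^{k+1}$ of at least $(k+1)+1>k+1$. Hence the $1$-weight of $D$ in $\tau^{n}$ exceeds $n$ for all $n$.

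I expect the only real obstacle to be the separation step: verifying that every normal disk $P$ splits $\Delta'$ into two balls each carrying a vertex, so that whichever ball contains the refinement point $e'$, there is a vertex on the far side whose new edge is forced to meet $P$. Once this is phrased uniformly for triangles and quadrilaterals, the persistence of old crossings and the integer induction are immediate.
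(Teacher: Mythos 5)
Your proof is correct, but it takes a genuinely different route from the paper's. Both arguments begin identically: use Theorem \ref{mainthm} (via Lemma \ref{disks}) to keep $D$ normal in every $\tau^{k}$, and note $w_{0}\geq 3$. The paper then tracks the number $d_{k}$ of $\tau^{k}$-normal pieces of $D$: reading off the explicit decompositions listed in Lemma \ref{disks}, each piece subdivides into at least $3$ pieces and contributes at least one new edge crossing, giving $d_{k}\geq 3d_{k-1}$ and $w_{k}\geq w_{k-1}+d_{k-1}$, hence the exponential bound $w_{n}\geq 3+\sum_{i=0}^{n-1}3^{i}>n$. You dispense with the piece count entirely and prove only the weaker claim that each refinement step adds at least one crossing, via a separation argument: a normal disk $P$ of $D\cap\Delta'$ is properly embedded and splits $\Delta'$ into two balls, each containing at least one vertex (one ball holds the linked vertex or the linked edge's endpoints, the other the remaining vertices); the new cone vertex $e'$ lies off $D$ in one ball, so the new edge from $e'$ to a vertex in the other ball must meet $P$. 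This yields only the linear bound $w_{n}\geq n+3$, which still gives $w_{n}>n$, and your integer induction handling the strict inequality is fine. What your route buys is economy and robustness: you use Lemma \ref{disks} only for the persistence of normality, not for the precise coordinates of the subdivided pieces, and the separation argument would work verbatim for any refinement obtained by coning from an interior point; what the paper's bookkeeping buys is a much stronger quantitative estimate (exponential rather than linear growth of weight), though the lemma as stated needs only yours. One point you should make explicit, though the paper is no more careful: the assertion that old crossings persist, so that the weight increase from $\tau^{k}$ to $\tau^{k+1}$ is exactly $|D\cap E_{k+1}|$, holds after the $\tau^{k}$-normal isotopy implicit in Lemma \ref{disks}, and such an isotopy preserves the count $|D\cap(\tau^{k})^{(1)}|$ because the $1$-skeleta are nested (the faces and edges of each tetrahedron survive the refinement unsubdivided).
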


\begin{proof}
As $D$ is $\tau$-normal, by Theorem \ref{mainthm}, $D$ is $\tau^n$ - normal. Let $d_n$ be the number of $\tau^n$-normal disks in $D$. Let $w_n(D) = w^{(1)} (D)$ in $\tau^n$, be the 1-weight of $D$ in $\tau^n$. As $D$
is a $\tau$-normal disk, its weight in $\tau^0 = \tau$ is greater
than equal to 3, therefore $d_0=1$ and $w_0(D) \geq 3$. Now we
claim that for $n > 0$, $d_n \geq 3 d_{n-1}$ and $w_n(D) \geq
w_{n-1}(D) + d_{n-1}$.

By lemma \ref{disks}, $D$ is divided into at least 3
$\tau^1$-normal disks on taking the refinement along $\phi$, and
its weight is increased by at least one. Therefore $d_1 \geq 3
d_0$ and $w_1 \geq w_0 + d_0$.

Similarly now, if $D$ is the union of $d_{n-1}$
$\tau^{n-1}$-normal disks then each such disk is divided into at
least 3 $\tau^n$-normal disks by taking the refinement along
$\phi$, while its weight is incremented by at least one for each
of the $\tau^{n-1}$-normal disks. Therefore $d_n \geq 3
d_{n-1}$, while $w_n \geq w_{n-1} + d_{n-1}$. So by induction, $w_n \geq w_0 + \Sigma_{i=0}^{n-1} d_i \geq w_0 + (\Sigma_{i=0}^{n-1} 3^i) d_0$\\

Therefore the weight $w_n(D) \geq 3 + 1 + 3 + 3^2 + 3^3... +
3^{n-1} > n$ for all $n > 0$.
\end{proof}

\begin{lemma} \label{outsidenbhd}
Let $\hat{F}$ be a closed surface in $M$ and let $W$ be a positive
integer. Let $\tau_1$ be a triangulation of a regular
neighbourhood $N(\hat{F})$ of $\hat{F}$ in $M$. Then, there exists an extension of
$\tau_1$ to a triangulation $\tau$ of $M$ such that for any $\tau$-normal
surface $S$ that is not contained in $N(\hat{F})$, $w^{(1)} (S) > W$.
\end{lemma}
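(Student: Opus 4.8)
The goal is to extend the given triangulation $\tau_1$ of $N(\hat F)$ to a triangulation $\tau$ of all of $M$ so that any normal surface straying outside $N(\hat F)$ is forced to have large $1$-weight. My plan is to first extend $\tau_1$ to \emph{some} triangulation $\tau_0$ of $M$ by any standard means (triangulate the complement $M \setminus \mathrm{int}\, N(\hat F)$ compatibly with the boundary triangulation induced by $\tau_1$ on $\partial N(\hat F)$, which exists since $\partial N(\hat F)$ carries the restriction of the prism triangulation). This gives a triangulation that agrees with $\tau_1$ on $N(\hat F)$ but over which I have no control of weights outside.

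The key idea is then to apply a scaling refinement of the kind introduced in Definition \ref{refinement}, but \emph{only} to the tetrahedra lying outside $N(\hat F)$. Concretely, I would define a scaling function $f \colon \{\Delta : \Delta \in \tau_0\} \to \Z$ by setting $f(\Delta) = 0$ for every tetrahedron $\Delta \subset N(\hat F)$ and $f(\Delta) = W$ (or any integer exceeding $W$) for every tetrahedron $\Delta$ not contained in $N(\hat F)$. Let $\tau = \phi_f(\tau_0)$ be the resulting refinement. Since $f$ vanishes on $N(\hat F)$, the refinement $\tau$ restricts to exactly $\tau_1$ on $N(\hat F)$, so $\tau$ is a genuine extension of $\tau_1$, as required.

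Now suppose $S$ is a $\tau$-normal surface not contained in $N(\hat F)$. Then $S$ meets some tetrahedron $\Delta$ of $\tau_0$ with $\Delta \not\subset N(\hat F)$, on which $\tau$ induces the triangulation $\phi^{W}(\Delta)$ obtained by $W$ iterates of $\phi$. By Theorem \ref{mainthm}, $S$ restricted to $\Delta$ is a union of $\tau_0$-normal disks, and by Lemma \ref{weights} each such $\phi^{W}(\Delta)$-normal piece coming from a $\tau_0$-normal disk in $\Delta$ has $1$-weight greater than $W$ inside that single tetrahedron. Since the $1$-weight of $S$ is at least the $1$-weight contributed by $S \cap \Delta$, we get $w^{(1)}(S) > W$, which is precisely the claim.

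The main subtlety I anticipate is bookkeeping rather than any deep obstacle: I must verify that the scaling refinement $\phi_f$ applied tetrahedron-by-tetrahedron genuinely assembles into a global triangulation of $M$ agreeing with $\tau_1$ on $N(\hat F)$. This rests on the observation already recorded in Definition \ref{refinement} that the faces of $\Delta$ are faces of $\phi(\Delta)$, so the refinements of adjacent tetrahedra match along shared faces regardless of the (possibly different) values of $f$ on the two sides; in particular the interface $\partial N(\hat F)$ is untouched since the subdivision $\phi$ adds only an interior vertex and interior edges. The second point to confirm is that Lemma \ref{weights} applies verbatim: it is stated for a single tetrahedron refined by $\phi^n$, which is exactly the local picture at $\Delta$, so a $\tau_0$-normal disk in $\Delta$ sitting inside $\phi^{W}(\Delta)$ has $1$-weight exceeding $W$, and a normal surface exiting $N(\hat F)$ must contain at least one such disk.
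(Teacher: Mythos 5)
Your proposal is correct and follows essentially the same route as the paper: extend $\tau_1$ arbitrarily to a triangulation of $M$, apply the scaling refinement $\phi_f$ with $f=0$ inside $N(\hat F)$ and $f=W$ outside, then combine Theorem \ref{mainthm} (to get a $\tau_0$-normal disk of $S$ in a tetrahedron outside $N(\hat F)$) with Lemma \ref{weights} (to bound its $1$-weight below by $W$). Your added verification that the face-preserving property of $\phi$ makes the locally defined refinements glue into a global triangulation agreeing with $\tau_1$ is a point the paper leaves implicit in Definition \ref{refinement}, but it is the same argument.
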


\begin{proof}
We extend the triangulation of $\del N(\hat{F})$ given by $\tau_1$, to
a triangulation $\tau_2$ of $M - int(N(\hat{F}))$. Then, $\tau' = \tau_1 \cup \tau_2$ is a triangulation of $M$. Let $f$ be the scaling function that takes the value $W$ on tetrahedra of $\tau_2$ and the value 0 on tetrahedra of $\tau_1$. Let $\tau = \phi_f(\tau')$ be the corresponding refined triangulation. We claim that $\tau$ is the required triangulation.

Let $S$ be a $\tau$-normal surface in $M$ that is not contained
in $N(S)$. By Theorem \ref{mainthm} then, $S$ is $\tau'$-normal as well. As $S$ is not contained in $N(\hat{F})$ there exists a $\tau_2$-normal disk $D$ in $S - int(N(\hat{F}))$. By lemma \ref{weights} now, the 1-weight of $D$ in $\tau$ is greater than $W$, therefore $w^{(1)} (S) > W$ in $\tau$.
\end{proof}

We are now in a position to prove Theorem \ref{incomprthm}.
\begin{proof}
Assume $F$ is incompressible. Let $\tau$ be
any triangulation of $M$. Let $F'$ be a surface isotopic to
$F$ of minimal PL-area in the isotopy class of $F$.  If $F'$ is not $\tau$-normal then it is known that there
exists a weight minimising isotopy of $F'$, which is a
contradiction. So every minimal PL-area surface in the
isotopy class of $F$ is normal.\\

Conversely, suppose $F$ is compressible. Let $\hat{F}$ be the surface
obtained by compressing $F$ along a compressing disk.
The surface $F$ is obtained from $\hat{F}$ by
attaching a 1-handle $\gamma$.

Let $\tau'$ be a prism triangulation of $N(\hat{F}) \cong \hat{F} \times I$. Let the
1-weight of the normal surface $\hat{F} \times \{ 0 \}$ be denoted by $W$. By applying lemma
\ref{outsidenbhd}, we obtain an
extension of $\tau'$ to a triangulation $\tau$ of $M$ such that
any normal surface that does not lie in $N(\hat{F})$ has 1-weight
greater than $W$.

We can assume the 1-handle $\gamma$ is disjoint from the 1-skeleton of
$\tau$. As $F$ is obtained from $\hat{F} \times \{0 \}$ by attaching this 1-handle, the 1-weight $W = w^{(1)}(\hat{F} \times \{ 0 \}) = w^{(1)}(F)$.

Assume there exists a normal minimal surface $F'$ isotopic to $F$. By construction of $\tau$, any normal surface that does not lie in $N(\hat{F})$ has 1-weight more than $W = w(F)$. So, $F'$ lies in $N(\hat{F})$.

By Theorem \ref{insidenbhd} then, $F'$ is isotopic to a
connected component of $\hat{F}$. As $F'$ is isotopic to $F$, we have $F$ isotopic to
a connected component of $\hat{F}$. This is a contradiction as
$F$ is compressible and hence every component of $\hat{F}$ has
genus strictly lower than the genus of $F$.
\end{proof}

\acknowledgements{The author would like to thank Siddhartha Gadgil for
useful discussions and advice. The CSIR-SPM Fellowship is
acknowledged for financial support.}

\bibliographystyle{amsplain}

\end{document}